\newcommand*{\tran}{^{\mkern-1.5mu\mathsf{T}}}
\definecolor{labelkey}{rgb}{0,0.08,0.45}
\definecolor{refkey}{rgb}{0,0.6,0.0}
\definecolor{Brown}{rgb}{0.45,0.0,0.05}
\definecolor{lime}{rgb}{0.00,0.8,0.0}
\definecolor{lblue}{rgb}{0.5,0.5,0.99}
\newcommand{\nnn}{\ensuremath{{n\in{\mathbb N}}}}
\newcommand{\kkk}{\ensuremath{{k\in{\mathbb N}}}}
\newcommand{\menge}[2]{\big\{{#1}~\big |~{#2}\big\}}
\newcommand{\To}{\ensuremath{\rightrightarrows}}
\newcommand{\fenv}[1]%
{\ensuremath{\,\overrightarrow{\operatorname{env}}_{#1}}}
\newcommand{\benv}[1]%
{\ensuremath{\,\overleftarrow{\operatorname{env}}_{#1}}}
\newcommand{\scal}[2]{\left\langle{#1},{#2}  \right\rangle}
\newcommand{\zeroun}{\ensuremath{\left]0,1\right[}}
\newcommand{\RR}{\ensuremath{\mathbb R}}
\newcommand{\SR}{\ensuremath{\mathbb S}}
\newcommand{\Id}{\ensuremath{\operatorname{Id}}}
\newcommand{\gconv}{\ensuremath{\,\overset{\mathsf{g}}{\rightarrow}}\,}
\newcommand{\pconv}{\ensuremath{\,\overset{\mathsf{p}}{\rightarrow}}\,}
\newtheorem{theorem}{Theorem}[section]
\newtheorem{lemma}[theorem]{Lemma}
\newtheorem{corollary}[theorem]{Corollary}
\newtheorem{proposition}[theorem]{Proposition}
\newtheorem{example}[theorem]{Example}
\newtheorem{remark}[theorem]{Remark}
\providecommand{\norm}[1]{\lVert#1\rVert}
\providecommand{\innp}[1]{\langle#1\rangle}
\providecommand{\RR}{\mathbb{R}}
\providecommand{\gr}{\operatorname{gra}}
\providecommand{\Id}{\operatorname{{ Id}}}
\providecommand{\To}{\rightrightarrows}
\providecommand{\gr}{\operatorname{gra}}
\providecommand{\Id}{\operatorname{Id}}
\providecommand{\nnn}{{n\in\NN}}
\providecommand{\RR}{\mathbb{R}}
\providecommand{\mM}{\mathcal{M}}
\providecommand{\mL}{\mathcal{L}}
\providecommand{\mS}{\mathcal{S}}
\definecolor{myblue}{rgb}{.8, .8, 1}
\begin{document}
%
\title{\textsc
On Douglas--Rachford operators\\ that fail to be proximal mappings}
\author{
Heinz H.\ Bauschke\thanks{
Mathematics, University
of British Columbia,
Kelowna, B.C.\ V1V~1V7, Canada. E-mail:
\texttt{heinz.bauschke@ubc.ca}.},~
Jason Schaad\thanks{
Department of Mathematics and Statistics,
Okanagan College, 1000 K.L.O.\ Road, 
Kelowna, B.C.\ V1Y~4X8, Canada. E-mail: 
\texttt{JSchaad@okanagan.bc.ca}.}, 
~and Xianfu Wang\thanks{
Mathematics, University of
British Columbia,
Kelowna, B.C.\ V1V~1V7, Canada. E-mail:
\texttt{shawn.wang@ubc.ca}.}}
\date{February 17, 2016}
\maketitle
\begin{abstract}
\noindent
The problem of finding a zero of the sum of two maximally
monotone operators is of central importance in optimization.
One successful method to find such a zero is the
Douglas--Rachford algorithm which iterates a firmly nonexpansive
operator constructed from the 
resolvents of the given monotone operators.

In the context of finding minimizers of convex functions, 
the resolvents are actually proximal mappings. 
Interestingly,
as pointed out by Eckstein in 1989, 
the Douglas--Rachford operator itself may fail to be a proximal
mapping. 
We consider the class of symmetric linear relations that are
maximally monotone and
prove the striking result that
the Douglas--Rachford operator is generically not a proximal
mapping. 
\end{abstract}
{\small
\noindent
{\bfseries 2010 Mathematics Subject Classification:}
{Primary  
47H09, 
Secondary 
47H05, 
90C25. 
}

\noindent {\bfseries Keywords:}
Douglas--Rachford algorithm,
firmly nonexpansive mapping,
maximally monotone operator, 
nowhere dense set, 
proximal mapping,
resolvent. 
}
\section{Introduction}
Throughout this paper, we work in 
the standard Euclidean space 
\begin{equation}
X=\RR^n,
\end{equation}
equipped 
with the standard inner product $\innp{\cdot,\cdot}$ and
induced Euclidean norm $\norm{\cdot}$.
Recall that a set-valued operator
\begin{equation}
A\colon X\To X
\end{equation}
is \emph{monotone} if $\scal{x-y}{x^*-y^*}\geq 0$ 
whenever $(x,x^*)$ and $(y,y^*)$ belong
to $\gr A$, the graph of $A$; 
$A$ is \emph{maximally monotone} if any proper enlargement of $A$ fails to
be monotone.
Maximally monotone operators are of importance in modern
optimization (see 
\cite{AT}, 
\cite{BC2011}, 
\cite{Bor50}, 
\cite{BorVan},
\cite{BorZhu}, 
\cite{Bot}, 
\cite{Regina},
\cite{EckBer}, 
\cite{Simons})
as they cover subdifferential operators
of functions that are convex lower semicontinuous and proper as
well as matrices whose symmetric part is positive semidefinite.
A central problem is to 
\begin{equation}
\label{e:sum}
\text{find $x\in X$ such that $0\in Ax+Bx$,}
\end{equation}
where $A$ and $B$ are maximally monotone on $X$.
For instance, if $A=\partial f$ and $B=\partial g$,
where $f$ and $g$ belong to $\Gamma_0(X)$, the set of functions
that are convex, lower semicontinuous and proper on $X$, then the 
sum problem \eqref{e:sum} is tied to the problem of finding a
minimizer of $f+g$. 
A popular iterative method, dating back to Lions and Mercier's
seminal work \cite{LM}, to solve \eqref{e:sum} is 
the \emph{Douglas--Rachford algorithm} whose governing sequence
$(x_n)_\nnn$ is given by
\begin{equation}
\label{e:DR}
(\forall\nnn)\quad x_{n+1} = T_{A,B}x_n,
\end{equation} 
where
$T_{A,B}=\Id-J_A+J_BR_A = (\Id+R_BR_A)/2$ is the Douglas--Rachford splitting
operator, $J_A = (\Id+A)^{-1}$ is the \emph{resolvent} of $A$ and
and $R_A = 2J_A-\Id$ is the \emph{reflected resolvent}. 
If $Z$, the set of solutions \eqref{e:sum}, is nonempty,
then $(x_n)_\nnn$ converges to a fixed point of $T_{A,B}$ and
$(J_Ax_n)_\nnn$ converges to a point in $Z$. 
In fact, as pointed in \cite{LM}, one has $T_{A,B} = J_C$ for some
maximally monotone operator $C$ depending on $(A,B)$.
That is, \eqref{e:DR} is actually the iteration of a resolvent
--- the resulting method was carefully studied by Rockafellar
\cite{Rockprox}.
If the operator $C$ is actually a subdifferential operator, i.e.,
$C=\partial h$, where $h\in \Gamma_0(X)$; or equivalently if
$J_C$ is a \emph{proximal map} (a.k.a.\ proximity operator) \cite{Moreau}, 
then stronger
statements are available concerning the resolvent iteration \cite{Guler}.
This prompts interest in the question whether $C=\partial h$.
Unfortunately, in general, $T_{A,B}=J_C$ is only a resolvent, not
a proximal map as demonstrated by Eckstein \cite{Eckthesis}; 
the following simpler example is from Schaad's thesis 
\cite{Schaad}. 
Suppose that $X=\RR^2$, and that $A$ and $B$ are the normal cone
operators of the subspaces $\RR(1,0)$ and $\RR(1,1)$. 
Then the associated maximally monotone operator is given by the
matrix 
\begin{equation}
C = \left[
\begin{array}{cc}
0 & 1 \\
-1 & 0 
\end{array}
\right], 
\end{equation}
which is \emph{not} symmetric and hence $C$ is not a
subdifferential operator.
The corresponding Douglas--Rachford operator
\begin{equation}
J_C = T_{A,B} = 
\frac{1}{2}\left[
\begin{array}{cc}
1 & -1 \\
1 & 1 
\end{array}
\right], 
\end{equation}
which is also \emph{not} symmetric, 
is therefore only a resolvent but \emph{not} a proximal mapping. 
This is surprising because \eqref{e:sum} corresponds in this case
to the convex feasibility problem asking to find a point in 
$\RR(1,0)\cap \RR(1,1)$ (which is $\{(0,0)\}$).

\emph{In this note paper we 
show that in the context of linear relations it is
\emph{generically}
the case that the Douglas--Rachford operator is only a resolvent
and \emph{not} a proximal mapping. 
}

The rest of the paper is organized as follows. 
In Section~\ref{s:aux}, we develop auxiliary results on matrices,
proximal mappings and convergence. 
Section~\ref{s:main} contains our main result. 

Finally, notation and notions not explicitly defined may 
be found in, e.g., \cite{BC2011}, \cite{MN}, 
\cite{Rock70},  or \cite{Rock98}.

\section{Auxiliary results}

\label{s:aux}

\subsection{Matrices}

Unless stated otherwise, we view $\RR^{n\times n}$, the set of
real $n\times n$ matrices, as a Banach space,
with norm $\|R\| := \sup_{\|x\|\leq 1} \|Rx\|$, which is the
square root of the largest
eigenvalue of $R\tran R$.
We denote by $\SR^n$ the subspace of symmetric $n\times n$
matrices.
A matrix $R$ is \emph{nonexpansive} if $\|R\|\leq 1$, i.e.,
$R$ belongs to the unit ball of $\RR^{n\times n}$. 
This set is convex, closed, and has $0$ in its interior.
The set of nonexpansive symmetric matrices is likewise in $\SR^n$. 

\begin{lemma}
\label{l:key}
Let $R_0,S_0,R_1,S_1$ be matrices in $\RR^{n\times n}$.
Suppose that $R_0$ commutes with $S_0$,
but that $R_1$ does not commute with $S_1$. 
For each $\lambda\in\zeroun$,
set $R_\lambda = (1-\lambda)R_0+\lambda R_1$ and 
set $S_\lambda = (1-\lambda)S_0+\lambda S_1$. 
Then $\menge{\lambda\in\zeroun}{\text{$R_\lambda$ commutes with
$S_\lambda$}}$ is either empty or a singleton.  
\end{lemma}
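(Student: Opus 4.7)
The plan is to track how the commutator $[R_\lambda,S_\lambda] := R_\lambda S_\lambda - S_\lambda R_\lambda$ depends on $\lambda$. First, I would expand both products $R_\lambda S_\lambda$ and $S_\lambda R_\lambda$ using the definitions of $R_\lambda$ and $S_\lambda$, then subtract. Grouping terms by the coefficient of $\lambda$, this yields the matrix-valued polynomial identity
\begin{equation*}
[R_\lambda,S_\lambda] = (1-\lambda)^2[R_0,S_0] + \lambda(1-\lambda)\bigl([R_0,S_1]+[R_1,S_0]\bigr) + \lambda^2[R_1,S_1].
\end{equation*}

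The next step is to exploit the hypothesis $[R_0,S_0]=0$ to drop the first term, and then to pull out an overall factor of $\lambda$, giving $[R_\lambda,S_\lambda] = \lambda\,N(\lambda)$, where
\begin{equation*}
N(\lambda) := (1-\lambda)\bigl([R_0,S_1]+[R_1,S_0]\bigr) + \lambda\,[R_1,S_1]
\end{equation*}
is an affine (degree at most one) function of $\lambda$ with values in $\RR^{n\times n}$. For $\lambda\in\zeroun$, the condition $[R_\lambda,S_\lambda]=0$ is equivalent to $N(\lambda)=0$.

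I would then observe that $N(1) = [R_1,S_1]\neq 0$ by hypothesis, so $N$ is not identically zero; in particular, in the representation $N(\lambda)=C+\lambda D$ with $C=[R_0,S_1]+[R_1,S_0]$ and $D=[R_1,S_1]-[R_0,S_1]-[R_1,S_0]$, the matrix $D$ cannot be zero (otherwise $N\equiv C = N(1)\neq 0$ and the set in question would be empty, which is fine, but we still need to handle $D\neq 0$). Choosing any index $(i,j)$ with $D_{ij}\neq 0$, the $(i,j)$ entry of $N(\lambda)$ is a nontrivial affine scalar function $C_{ij}+\lambda D_{ij}$ which has a unique real root. Consequently, $N(\lambda)=0$ can hold for at most one value of $\lambda\in\RR$, which forces $\{\lambda\in\zeroun\colon R_\lambda \text{ commutes with } S_\lambda\}$ to be empty or a singleton.

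There is no real obstacle here beyond being tidy with the algebra: the essential content is that the commutator, after removing a factor of $\lambda$ that comes from $[R_0,S_0]=0$, is an affine (not quadratic) matrix polynomial, and the assumption $[R_1,S_1]\neq 0$ ensures it is not the zero polynomial and hence has at most one root.
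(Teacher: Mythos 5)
Your proof is correct and follows essentially the same route as the paper: both reduce the problem to observing that the commutator $R_\lambda S_\lambda - S_\lambda R_\lambda$ is a degree-at-most-two matrix polynomial in $\lambda$ vanishing at $\lambda=0$ but not at $\lambda=1$, and then pass to a scalar entry to bound the number of roots in $\zeroun$ by one. (The paper skips your explicit factorization of $\lambda$ and the case split on $D=0$ by simply noting that a quadratic scalar polynomial $q$ with $q(0)=0$ and $q(1)\neq 0$ has at most one root in the open interval, but this is a cosmetic difference only; your sentence ``the matrix $D$ cannot be zero'' is slightly misstated since you then correctly handle that case, yet the argument is sound.)
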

\begin{proof}
For $\lambda\in[0,1]$, consider
the matrix
\begin{equation}
M_\lambda = R_\lambda S_\lambda - S_\lambda R_\lambda.
\end{equation}
By hypothesis, $M_0=0$ but $M_1\neq 0$. 
Since $M_1\neq 0$, there exist $(i,j)\in\{1,\ldots,n\}^2$ such
that the $(i,j)$ entry of $M_1$ is not $0$. 
Denote by $q(\lambda)$ the $(i,j)$ entry of $M_\lambda$.
Then $q(\lambda)$ is a polynomial in $\lambda$ of degree at most $2$,
with $q(0)=0$ and $q(1)\neq 0$. 
On $\zeroun$, $q$ has at most one root.
Therefore, with the possible exception of one value
$\lambda\in\zeroun$, $M_\lambda \neq 0$.
\end{proof}

\begin{example}
\label{ex:coolmat}
Suppose that $n\geq 2$ and define matrices in $\SR^n$ by 
\begin{equation}
\sbox0{$\begin{matrix}1&0\\0&-1\end{matrix}$}
R_0=R_1=\left[
\begin{array}{c|c}
\usebox{0}&\makebox[\wd0]{\large $0$}\\
\hline
  \vphantom{\usebox{0}}\makebox[\wd0]{\large $0$}&\makebox[\wd0]{\large $0$}
\end{array}
\right], 
\quad
\sbox0{$\begin{matrix}-1&0\\0&1\end{matrix}$}
S_0=\left[
\begin{array}{c|c}
\usebox{0}&\makebox[\wd0]{\large $0$}\\
\hline
  \vphantom{\usebox{0}}\makebox[\wd0]{\large $0$}&\makebox[\wd0]{\large $0$}
\end{array}
\right], 
\quad
\sbox0{$\begin{matrix}0&1\\1&0\end{matrix}$}
S_1=\left[
\begin{array}{c|c}
\usebox{0}&\makebox[\wd0]{\large $0$}\\
\hline
  \vphantom{\usebox{0}}\makebox[\wd0]{\large $0$}&\makebox[\wd0]{\large $0$}
\end{array}
\right]. 
\end{equation}
For each $\lambda\in\zeroun$, set
$R_\lambda = (1-\lambda)R_0+\lambda R_1$ and 
set $S_\lambda = (1-\lambda)S_0+\lambda S_1$. 
Let $\lambda\in[0,1]$. 
Then 
\begin{equation}
\|R_\lambda\|=1, \;
\|S_\lambda\|=\textstyle \sqrt{(1-\lambda)^2+\lambda^2}\in
\big[1/\sqrt{2},1\big]
\end{equation}
and
\begin{equation}
\sbox0{$\begin{matrix}\lambda-1&\lambda\\-\lambda&\lambda-1\end{matrix}$}
R_\lambda S_\lambda =\left[
\begin{array}{c|c}
\usebox{0}&\makebox[\wd0]{\large $0$}\\
\hline
  \vphantom{\usebox{0}}\makebox[\wd0]{\large $0$}&\makebox[\wd0]{\large $0$}
\end{array}
\right],\;
\sbox0{$\begin{matrix}\lambda-1&-\lambda\\\lambda&\lambda-1\end{matrix}$}
S_\lambda R_\lambda =\left[
\begin{array}{c|c}
\usebox{0}&\makebox[\wd0]{\large $0$}\\
\hline
  \vphantom{\usebox{0}}\makebox[\wd0]{\large $0$}&\makebox[\wd0]{\large $0$}
\end{array}
\right].
\end{equation}
Consequently, $R_\lambda$ commutes with $S_\lambda$ if and only
if $\lambda=0$. 
\end{example}

\subsection{Proximal mappings}

We now characterize proximal mappings within the set of
resolvents. 

\begin{lemma} \cite[Lemma~4.36]{Schaad}
Let $T\in\RR^{n\times n}$ be a proximal mapping. 
Then $T=T\tran$.
\end{lemma}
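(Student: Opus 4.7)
The plan is to use Moreau's classical identification of proximal mappings as gradients of convex functions, and then to invoke the integrability obstruction for a linear vector field to be a gradient. Since $T$ is a proximal mapping, there exists $f\in\Gamma_0(\RR^n)$ with $T=\prox_f$. The Moreau decomposition $\Id=\prox_f+\prox_{f^*}$, together with the standard fact that the (parameter-$1$) Moreau envelope $\phi$ of $f^*$ is convex and continuously differentiable on $\RR^n$ with $\nabla\phi(x)=x-\prox_{f^*}(x)$, yields
\begin{equation}
T \;=\; \prox_f \;=\; \nabla\phi.
\end{equation}

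Next I would exploit the linearity of $T$. Because $x\mapsto Tx$ is of class $C^\infty$, so is $\nabla\phi$, and hence $\phi$ is itself $C^\infty$ with constant Hessian $\nabla^2\phi(x)\equiv T$ for every $x\in\RR^n$. Invoking Schwarz's theorem on the equality of mixed partial derivatives, $\nabla^2\phi(x)$ must be symmetric at every point, which combined with $\nabla^2\phi\equiv T$ immediately delivers $T=T\tran$.

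No serious obstacle is anticipated: once the representation $T=\nabla\phi$ is in hand, the argument reduces to the familiar fact that a linear vector field on $\RR^n$ is conservative if and only if the associated matrix is symmetric. One could equivalently make this explicit by direct integration along rays, $\phi(x)-\phi(0)=\int_0^1\langle Tx,tx\rangle\,\mathrm{d}t=\tfrac12\langle Tx,x\rangle$, and then equating the gradient $\tfrac12(T+T\tran)x$ of the right-hand side with $Tx$ to force $T=T\tran$.
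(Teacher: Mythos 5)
Your proof is correct and follows essentially the same route as the paper: both arguments write $T$ as the gradient of a convex function and then conclude from the symmetry of second derivatives (the paper uses $T=\nabla(q+f)^*$ with $q=\tfrac12\|\cdot\|^2$, while your Moreau envelope of $f^*$ is in fact that very same function, since $(q+f)^*=q\,\infconv\, f^*$). The ray-integration remark at the end is a nice elementary alternative to invoking Schwarz's theorem, but the substance of the argument is the same.
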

\begin{proof}
Set $q\colon x\mapsto \tfrac{1}{2}\|x\|^2$.
Then $\Id = \nabla q$.
By hypothesis, $T$ is a proximal mapping, so 
there exists a convex $f$ such that 
\begin{equation}
T = (\Id +\partial f)^{-1} = (\partial (q+f))^{-1} = \partial
(q+f)^* = \nabla (q+f)^*. 
\end{equation}
It follows that $T = \nabla T = \nabla^2(q+f)^*$ is symmetric. 
\end{proof}

It turns out that the converse of the previous result also holds. 

\begin{lemma} 
Let $T\in\RR^{n\times n}$ be firmly nonexpansive\footnote{For
further information on firmly nonexpansive mappings, see
\cite{BC2011} and \cite{GR}.} and such that 
$T=T\tran$. Then $T$ is a proximal mapping.
\end{lemma}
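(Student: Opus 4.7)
The plan is to argue via the spectral decomposition of $T$. Since $T$ is firmly nonexpansive, the reflected operator $R := 2T - \Id$ is nonexpansive; combined with the symmetry of $T$ (hence of $R$), the operator norm $\|R\|$ coincides with the largest absolute eigenvalue of $R$. This forces $|2\lambda - 1| \leq 1$ for every eigenvalue $\lambda$ of $T$, so the spectrum of $T$ lies in $[0,1]$. Hence there exist an orthogonal matrix $U$ and a diagonal matrix $\Lambda = \operatorname{diag}(\lambda_1,\ldots,\lambda_n)$ with $\lambda_i \in [0,1]$ such that $T = U\Lambda U\tran$.

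Next, having reduced $T$ to a diagonal operator in the $U$-rotated coordinates, we build $f$ separably in those coordinates. Specifically, for each $i$ we exhibit a function $g_i \in \Gamma_0(\RR)$ whose proximal map is the dilation $t \mapsto \lambda_i t$: the scaled quadratic $g_i(t) = \tfrac{1-\lambda_i}{2\lambda_i}t^2$ works when $\lambda_i \in (0,1]$, and $\lambda_i = 0$ is handled by taking $g_i = \iota_{\{0\}}$. Setting $f(x) = \sum_{i=1}^n g_i((U\tran x)_i)$ then yields an element of $\Gamma_0(\RR^n)$.

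The final step is a routine verification: exploiting the orthogonality of $U$ and the separability of $f$ in the $U$-coordinates, one computes $\prox_f(x) = U\Lambda U\tran x = Tx$, which shows that $T$ is a proximal mapping. The one mild subtlety, and the only thing requiring care, is the presence of zero eigenvalues of $T$: no finite quadratic on $\RR$ has identically zero proximal map, so $f$ must be allowed to be extended-valued, and an indicator function on the kernel directions is used. Beyond that, the argument is direct and runs in reverse to the proof of the preceding lemma.
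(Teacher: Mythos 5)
Your argument is correct, but it takes a genuinely different route from the paper. The paper's proof is a two-line appeal to the (extended) Baillon--Haddad theorem: set $f(x)=\tfrac{1}{2}\scal{x}{Tx}$, note that symmetry gives $\nabla f = T$ and firm nonexpansiveness gives monotonicity of $T$ and hence convexity of $f$, and then cite \cite[Theorem~18.15]{BC2011} to conclude that $\nabla f=T$ is a proximal mapping. You instead diagonalize: firm nonexpansiveness of $T$ is equivalent to nonexpansiveness of $R=2T-\Id$, and for symmetric $R$ the operator norm is the spectral radius, which pins the spectrum of $T$ into $[0,1]$; you then explicitly build the function $h$ with $\prox_h = T$ as a separable sum in the eigenbasis, using $g_i(t)=\tfrac{1-\lambda_i}{2\lambda_i}t^2$ for $\lambda_i\in\left]0,1\right]$ and $g_i=\iota_{\{0\}}$ for $\lambda_i=0$. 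All the steps check out (including the prox computation $\tfrac{1-\lambda}{\lambda}t+t=s \Rightarrow t=\lambda s$, the convexity of each $g_i$ since $1-\lambda_i\geq 0$, and the commutation of $\prox$ with the orthogonal change of variables). What your approach buys is that it is constructive and elementary --- it exhibits the function whose proximal map is $T$ in closed form and avoids invoking Baillon--Haddad --- and your remark about the necessity of extended-valued $g_i$ for zero eigenvalues is a genuine point that the paper's abstract argument hides. What it costs is length and a reliance on finite-dimensional spectral decomposition; the paper's argument is dimension-free and transfers verbatim to the Hilbert space setting, whereas your separable-sum construction would need to be replaced by a spectral integral there.
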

\begin{proof}
Set $f\colon X\to\RR\colon x\mapsto \tfrac{1}{2}\scal{x}{Tx}$.
Since $T$ is symmetric, we have $\nabla f = T$. 
Since $T$ is firmly nonexpansive, it is monotone and thus $f$ is
convex. 
By the (extended form of the) Baillon--Haddad theorem
(see \cite[Theorem~18.15]{BC2011}),
$\nabla f = T$ is a proximal map. 
\end{proof}

We thus obtain the following useful characterization of
proximal mappings. 

\begin{corollary}
\label{c:key}
Let $T\in\RR^{n\times n}$. 
Then $T$ is a proximal mapping if and only if
$T$ is both firmly nonexpansive and symmetric. 
\end{corollary}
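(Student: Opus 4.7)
The plan is to deduce the corollary directly from the two lemmas that immediately precede it, together with the classical fact that every proximal mapping is firmly nonexpansive (e.g.\ \cite[Proposition~12.27]{BC2011}).

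For the forward implication, I would assume that $T\in\RR^{n\times n}$ is a proximal mapping. On the one hand, the cited classical fact gives that $T$ is firmly nonexpansive. On the other hand, the first lemma of the subsection yields $T=T\tran$, so $T$ is symmetric. Together these give the two asserted properties.

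For the reverse implication, I would assume that $T\in\RR^{n\times n}$ is both firmly nonexpansive and symmetric, and then invoke the second lemma of the subsection verbatim to conclude that $T$ is a proximal mapping.

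There is essentially no obstacle here: the corollary is a pure packaging step that combines the two lemmas (plus the standard fact that proximal mappings are firmly nonexpansive) into a single if-and-only-if statement, so the proof reduces to one or two sentences citing those results.
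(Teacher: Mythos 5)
Your proposal is correct and matches the paper's (implicit) argument exactly: the corollary is stated there without proof as an immediate consequence of the two preceding lemmas, with the forward direction also using the standard fact that proximal mappings are firmly nonexpansive. Nothing is missing.
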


\subsection{Convergence}

From now on, we denote 
the set of maximally monotone operators on $X$ by $\mM$,
the subset of linear relations\footnote{A linear relation on $X$ is
set-valued map from $X$ to $X$ such that its graph is a linear
subspace of $X\times X$. In relationship to the present paper, 
we refer the reader to \cite{63} for more on maximally monotone
linear relations. Furthermore, a resolvent $J_A$ is linear if and only
if $A\in\mL$ by \cite[Theorem~2.1(xviii)]{65}.} by $\mL$,
and the subdifferential operators of functions in
$\Gamma_0(X)$ by $\mS$. 

Let $(A_k)_\kkk$ be a sequence in $\mM$ and let $A\in\mM$.
Then $(A_k)_\kkk$ converges to $A$ \emph{graphically}, in symbols 
$A_k \gconv A$ if and only if the resolvents converge pointwise,
in symbols, 
$J_{A_k}\pconv J_A$. 
This induces a metric topology on $\mM$
(see \cite{Rock98} for details). 
Note that $\mL$ is a closed topological subspace of $\mM$
and that pointwise convergence by resolvents can in that setting 
be replaced by
convergence in operator norm (since $X$ is finite-dimensional). 

The following result is now easily verified. 

\begin{proposition}
\label{p:conv}
Let 
 $(A_k)_\kkk$ be a sequence in $\mL$, and let $A\in\mL$.
Then we have the equivalences
\begin{equation}
A_k\gconv A
\Leftrightarrow
J_{A_k}\pconv J_A
\Leftrightarrow
R_{A_k}\pconv R_A
\Leftrightarrow
J_{A_k}\to J_A
\Leftrightarrow
R_{A_k}\to R_A.
\end{equation}
\end{proposition}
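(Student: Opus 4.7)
The plan is to chain the five statements together by reducing everything to facts that are either recalled immediately above the proposition or are elementary consequences of linearity and finite dimensionality. I would organize the proof as a cycle of implications (or, equivalently, a short list of pairwise equivalences), taking advantage of the fact that on $\mL$ the operators $J_{A_k}$ and $R_{A_k}$ are genuine linear maps on $X=\RR^n$.

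First I would dispose of $A_k\gconv A \Leftrightarrow J_{A_k}\pconv J_A$ by simply quoting the definition of graphical convergence that is recalled in the paragraph preceding the statement (namely, graphical convergence of maximally monotone operators is characterized by pointwise convergence of the associated resolvents; see \cite{Rock98}). This equivalence is valid for every sequence in $\mM$, hence in particular on the closed subspace $\mL$.

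Next I would handle $J_{A_k}\pconv J_A \Leftrightarrow R_{A_k}\pconv R_A$. Since each $A_k$ and $A$ lie in $\mL$, the resolvents $J_{A_k}$ and $J_A$ are linear operators on $X$ by the cited \cite[Theorem~2.1(xviii)]{65}. The identity $R_{A_k} = 2J_{A_k} - \Id$ (and its limit version $R_A = 2J_A - \Id$) is linear in $J_{A_k}$, so for every $x\in X$,
\begin{equation}
R_{A_k}x - R_A x = 2\bigl(J_{A_k}x - J_A x\bigr),
\end{equation}
which makes the equivalence of pointwise convergence immediate.

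Finally, for $J_{A_k}\pconv J_A \Leftrightarrow J_{A_k}\to J_A$ (and the analogous statement for $R_{A_k}$), I would invoke finite dimensionality: on $\RR^n$, a sequence of linear operators converges pointwise if and only if it converges in operator norm. This follows by testing pointwise convergence against the finite standard basis and then using equivalence of norms on the space $\RR^{n\times n}$. Combining the three equivalences yields the proposition.

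The main thing to be careful about is not really an obstacle but a bookkeeping point: one must note that linearity of the resolvents is needed both to transfer between $J_{A_k}$ and $R_{A_k}$ and to upgrade pointwise convergence to norm convergence; neither of these steps would be available for a general sequence in $\mM\setminus\mL$. Everything else is either the definition of graphical convergence or a one-line consequence of finite dimensionality.
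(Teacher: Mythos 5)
Your proposal is correct and is essentially the verification the paper has in mind: the paper gives no written proof (it says the result ``is now easily verified''), relying on exactly the three ingredients you use --- the Rockafellar--Wets characterization of graphical convergence via pointwise convergence of resolvents, the identity $R_A = 2J_A - \Id$, and the equivalence of pointwise and norm convergence for linear operators on a finite-dimensional space. (One tiny quibble: the transfer between $J_{A_k}\pconv J_A$ and $R_{A_k}\pconv R_A$ needs only the affine relation $R_A=2J_A-\Id$, not linearity of the resolvents, so that step would survive even outside $\mL$; linearity is genuinely needed only for upgrading pointwise to norm convergence.)
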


We thus are able to define a metric on $\mL$ by
\begin{equation}
(A_1,A_2)\mapsto  
\|J_{A_1}-J_{A_2}\|
\end{equation}
and a metric on $\mL\times\mL$ by
\begin{equation}
\big((A_1,B_1),(A_2,B_2)\big)\mapsto 
\|J_{A_1}-J_{A_2}\| + \|J_{B_1}-J_{B_2}\|.
\end{equation}
Note that in view of the pointwise characterization of
Proposition~\ref{p:conv}, both $\mL$ and $\mL\times\mL$ are
\emph{complete} and so are $\mL\cap\mS$ and
$(\mL\cap\mS)\times(\mL\cap\mS)$. 

These topological notions are used in the next section which
contains our main result.

\section{Main result}

\label{s:main}

Recall that for $A$ and $B$ in $\mM$, the Douglas--Rachford
operator is defined by 
\begin{equation}
T(A,B) = T_{(A,B)} = \tfrac{1}{2}\big(\Id + R_BR_A\big).
\end{equation}
Note that $T_{(A,B)}$ is firmly nonexpansive
and the resolvent of some maximally monotone operator
$M(A,B)\in\mM$ but it may be the case that $M(A,B)\notin\mS$ even
when $A$ and $B$ belong to $\mS$.

We are ready for our main result.

\begin{theorem}
\label{t:main}
Suppose that $n\geq 2$. 
Then generically, the Douglas--Rachford operators for
symmetric linear relations are not proximal mappings;
in fact, the set
\begin{equation}
D := \menge{(A,B)\in(\mL\cap \mS)^2}{T_{(A,B)}\text{\rm \ is a proximal
map}}
\end{equation}
is a closed subset of $(\mL\cap\mS)^2$ that is nowhere dense. 
\end{theorem}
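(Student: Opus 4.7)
The approach is to reduce the statement to a question about symmetric nonexpansive matrices and then apply Lemma~\ref{l:key} together with Example~\ref{ex:coolmat} to produce the needed perturbations. By the footnote on $\mL$ and Corollary~\ref{c:key}, one has $A\in \mL\cap \mS$ iff $R_A=2J_A-\Id$ is a symmetric nonexpansive matrix; conversely every such matrix arises this way, since $J:=(R+\Id)/2$ is symmetric and firmly nonexpansive, hence a proximal map $J=\prox_f$ by Corollary~\ref{c:key}, and then $A:=\partial f$ is linear (as $J_A=J$ is linear) and in $\mS$. Proposition~\ref{p:conv} then makes $A\mapsto R_A$ a homeomorphism between $\mL\cap \mS$ and the closed unit ball of $\SR^n$. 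Since $T_{(A,B)}=\tfrac{1}{2}(\Id+R_BR_A)$ is firmly nonexpansive, Corollary~\ref{c:key} identifies it as a proximal map precisely when it is symmetric; using $R_A^\top=R_A$ and $R_B^\top=R_B$, this collapses to the commutation relation $R_AR_B=R_BR_A$. Under the above identification, $D$ therefore corresponds to
$$
\big\{(R,S)\in \SR^n\times \SR^n : \|R\|\leq 1,\ \|S\|\leq 1,\ RS=SR\big\}.
$$

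Closedness of $D$ is immediate: if $(A_k,B_k)\to (A,B)$ in $(\mL\cap \mS)^2$, then $R_{A_k}\to R_A$ and $R_{B_k}\to R_B$ in operator norm by Proposition~\ref{p:conv}, and commutativity passes to the limit by continuity of matrix multiplication.

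For nowhere density, given $(A_0,B_0)\in D$, I would construct explicit perturbations converging to $(A_0,B_0)$ inside $(\mL\cap \mS)^2\setminus D$. Put $R_0:=R_{A_0}$ and $S_0:=R_{B_0}$ (symmetric, nonexpansive, and commuting), and fix the specific pair $(R^*,S^*):=(R_1,S_1)$ from Example~\ref{ex:coolmat}, extending the $2\times 2$ blocks by zeros to $\RR^{n\times n}$ when $n>2$; by that example $R^*,S^*$ are symmetric and nonexpansive with $R^*S^*\neq S^*R^*$. For $\lambda\in[0,1]$, set $R_\lambda:=(1-\lambda)R_0+\lambda R^*$ and $S_\lambda:=(1-\lambda)S_0+\lambda S^*$; convexity of the nonexpansive ball in $\SR^n$ keeps both symmetric and nonexpansive. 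Lemma~\ref{l:key} then forces $R_\lambda S_\lambda\neq S_\lambda R_\lambda$ for all but at most one $\lambda\in\zeroun$, so letting $\lambda\to 0^+$ through the remaining values produces non-commuting pairs in operator norm converging to $(R_0,S_0)$. Translating back via the homeomorphism $A\leftrightarrow R_A$ yields a sequence in $(\mL\cap \mS)^2\setminus D$ converging to $(A_0,B_0)$, proving $D$ has empty interior and is therefore nowhere dense.

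The argument is essentially an assembly of the tools from Section~\ref{s:aux}. The only subtlety is packaging the bijection between $\mL\cap \mS$ and the unit ball of $\SR^n$ as a homeomorphism; once this is in place, Lemma~\ref{l:key} combined with the specific non-commuting pair from Example~\ref{ex:coolmat} does all the heavy lifting, and there is no further real obstacle.
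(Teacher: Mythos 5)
Your proposal is correct and follows essentially the same route as the paper: Corollary~\ref{c:key} reduces proximality of $T_{(A,B)}$ to symmetry and hence to commutation of $R_A$ with $R_B$, closedness follows from Proposition~\ref{p:conv} and continuity of matrix multiplication, and nowhere density is obtained by perturbing along the segment toward the non-commuting pair of Example~\ref{ex:coolmat} via Lemma~\ref{l:key}. The only difference is presentational: you package the correspondence $A\leftrightarrow R_A$ explicitly as a homeomorphism with the unit ball of $\SR^n$, whereas the paper works directly with the operators $A_\lambda=\big(\tfrac12\Id+\tfrac12 R_\lambda\big)^{-1}-\Id$.
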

\begin{proof}
We start by verifying that $D$ is closed. To this end, 
let $(A_k,B_k)_\kkk$ be a sequence in $D$ converging
to $(A,B)\in(\mL\cap\mS)^2$. 
By definition, $T(A_k,B_k)$ is a proximal mapping for every
$\kkk$. By Corollary~\ref{c:key}, $(\forall \kkk)$
$T(A_k,B_k)\tran = T(A_k,B_k)$. 
Hence $(\forall\kkk)$ $R_{A_k}R_{B_k}=R_{B_k}R_{A_k}$.
In view of Proposition~\ref{p:conv}, we take the limit and obtain
$R_{A}R_{B}=R_{B}R_{A}$. Thus $T(A,B)=T(A,B)\tran$.
Using Corollary~\ref{c:key} again, we deduce that $T(A,B)$ is a
proximal map. 

We now show that $D$ is nowhere dense.
Let $(A_0,B_0)$ be in $D$. 
Then $R_{A_0}R_{B_0}=R_{B_0}R_{A_0}$.
Next, set $A_1 = ((R_1+\Id)/2)^{-1}-\Id$ and 
$B_1 = ((S_1+\Id)/2)^{-1}-\Id$, where $R_1$ and $S_1$ are
as in Example~\ref{ex:coolmat}. 
Then $R_{A_1}=R_1$ and $R_{B_1}=S_1$ do not commute
and hence $(A_1,B_1)\notin D$. 
Now set
\begin{subequations}
\begin{equation}
\big(\forall\lambda\in\zeroun\big)\quad
A_\lambda = \Big(\tfrac{1}{2}\Id +
\tfrac{1}{2}\big((1-\lambda)R_{A_0}+\lambda R_{A_1}\big) \Big)^{-1}-\Id 
\end{equation}
and 
\begin{equation}
\big(\forall\lambda\in\zeroun\big)\quad
B_\lambda = \Big(\tfrac{1}{2}\Id +
\tfrac{1}{2}\big((1-\lambda)R_{B_0}+\lambda R_{B_1}\big)
\Big)^{-1}-\Id .
\end{equation}
\end{subequations}
Then, as $\lambda\to 0^+$,
\begin{equation}
R_{A_\lambda} = (1-\lambda)R_{A_0}+\lambda R_{A_1}\to R_{A_0}
\text{~and~}
R_{B_\lambda} = (1-\lambda)R_{B_0}+\lambda R_{B_1}\to R_{B_0}.
\end{equation}
It follows from Lemma~\ref{l:key} and Corollary~\ref{c:key}
that there exists $\mu\in\left]0,1\right]$ such that
$(\forall \lambda\in\left]0,\mu\right])$
$(A_\lambda,B_\lambda)\notin D$.
Hence $(A_0,B_0)$ does not belong to the interior of $D$. 
\end{proof}

\begin{remark}
The assumption that $n\geq 2$ in Theorem~\ref{t:main} is
important: indeed, when $n=1$, it is well known that every maximally monotone
operator is actually a subdifferential operator (see, e.g.,
\cite[Corollary~22.19]{BC2011}) and therefore every
Douglas--Rachford operator is a proximal mapping in this case. 
\end{remark}

\begin{remark}[open problems]
The following questions appear to be of interest:
\begin{enumerate}
\item 
Does Theorem~\ref{t:main}
admit an extension from symmetric linear relations to general
subdifferential operators?
\item The set $D$ in Theorem~\ref{t:main} is closed and nowhere
dense. Is it also a porous\footnote{See \cite{RZ} for further
information on porous sets.} set?
\end{enumerate}
\end{remark}

\section*{Acknowledgements}
HHB was partially supported by the Natural Sciences and Engineering Research Council of Canada and by the Canada Research Chair Program. 
XW was partially supported by the Natural Sciences and
Engineering Research Council of Canada.


\end{document}